\newtheorem{theorem}{Theorem}
\newtheorem{corollary}{Corollary}[section]
\newtheorem{proposition}{Proposition}[section]
\theoremstyle{definition}
\newtheorem{definition}{Definition}[section]
\newtheorem{remark}{Remark}[section]
\numberwithin{equation}{section}
\begin{document}
\setcounter{page}{1}

\def\real{\mathbb{R}}
\newtheorem{Problem}{Problem}[section]
\newcommand{\wto}{ \ \stackrel{w} {\longrightarrow} \ }

\vspace*{1.0cm}
\title[Coupled variational inequalities]
{Existence of solution to a new class of coupled variational-hemivariational inequalities}
\author[Yunru Bai, Stanislaw Mig\'orski, Van Thien Nguyen, Jianwen Peng]{Yunru Bai$^{1,*}$, Stanislaw Mig\'orski$^2$, Van Thien Nguyen${^3}$, Jianwen Peng$^4$}
\maketitle
\vspace*{-0.6cm}

\begin{center}
{\footnotesize {\it

$^1$School of Science, Guangxi University of Science and Technology,
Liuzhou 545006, Guangxi, China.  \\

$^2$College of Applied Mathematics, Chengdu University of Information Technology, Chengdu 610225, Sichuan, P.R. China, and Jagiellonian University in Krakow, Faculty 
of Mathematics and Computer Science, \\ ul. Lojasiewicza~6, 30348 Krakow, Poland. \\

$^3$Departement of Mathematics, FPT University, Education zone, Hoa Lac High Tech Park, \\ Km29 Thang Long highway, Thach That ward, Hanoi, Vietnam. \\

$^4$School of Mathematics Science, Chongqing Normal University, Chongqing 401331, P.R. China.

}}\end{center}

\vskip 4mm {\small \noindent {\bf Abstract.}
The objective of this paper is to introduce and
study a complicated nonlinear system, called coupled variational-hemivariational inequalities, which is described by a highly nonlinear coupled system of inequalities on Banach spaces. We establish the nonemptiness and compactness of the solution set to the system.
We apply a new method of proof based on a multivalued version of the Tychonoff fixed point principle in a Banach space combined with the generalized monotonicity arguments, and elements of the nonsmooth analysis. Our results improve and generalize some earlier theorems obtained for a very particular form of the system.

\vskip 1mm \noindent {\bf Keywords.}
Coupled variational-hemivariational inequalities; 
existence; the Tychonoff fixed point theorem; 
the Clarke subgradient; compactness.}

\renewcommand{\thefootnote}{}
\footnotetext{ $^*$Corresponding author.
\par
E-mail addresses: 
yunrubai@163.com (Y. Bai), 
stanislaw.migorski@uj.edu.pl (S. Mig\'orski),
Thiennv15@fe.\-edu.vn (V. T. Nguyen),
jianwenpengcqu@163.com (J. Peng)
\par
Received February 10, 2022; Accepted March 7, 2022. }

\section{Introduction}

In this paper we study the existence of solution to a new class of systems of two nonlinear coupled variational-hemivariational inequalities with constraints.
Each inequality involves a nonlinear operator, the generalized (Clarke) directional derivative of a locally Lipschitz function, a convex potential,
and a constraint set.
The main feature of the system is a strong coupling which appears in the nonlinear operators and the generalized directional derivatives.
Our results concern existence and compactness of the solution set to the system, and generalize the results obtained very recently in~\cite{Liu-Yang-Zeng-Zhao-2021-JOTA} by using
a different method.

To introduce the problem we need the following functional framework which will be used throughout the paper.
Let $(V,\|\cdot\|_V)$ and $(E,\|\cdot\|_E)$ be real reflexive Banach spaces, and $C\subset V$ and $D\subset E$ be nonempty, closed and convex sets.
We are given two nonlinear operators
$A\colon E\times V\to V^*$ and
$B\colon V\times E\to E^*$, two convex functions $\psi\colon V\to \overline\real:=\real\cup\{+\infty\} $ and $\theta\colon E\to \overline\real$, two nonlinear functions $J\colon Z_1\times X\to \real$ and $H\colon Z_2\times Y\to \real$ (which are locally Lipschitz continuous with respect to their second variables), four linear operators
$\gamma_1\colon V\to X$,
$\gamma_2\colon E\to Y$,
$\delta_1\colon E\to Z_1$ and
$\delta_2\colon V\to Z_2$, and two elements
$h\in V^*$ and $l\in E^*$.
The system of two coupled nonlinear variational-hemivariational inequalities reads as follows.
\begin{Problem}\label{problems1}
	Find $u\in C$ and $w\in D$ satisfying the following inequalities
	\begin{equation}\label{eqn1}
	\langle A(w,u),v-u\rangle_V+J^0(\delta_1 w,\gamma_1 u;\gamma_1 (v-u))+\psi(v)-\psi(u)\ge \langle h,v-u\rangle_V
	\end{equation}
	for all $v\in C$, and
	\begin{equation}\label{eqn2}
	\langle B(u,w),z-w\rangle_E+H^0(\delta_2 u,\gamma_2 w;\gamma_2 (z-w))+\theta(z)-\theta(w)\ge \langle l,z-w\rangle_E
	\end{equation}
	for all $z\in D$.
\end{Problem}

It should be mentioning that Problem~\ref{problems1} is new and contains many challenging and important problems as its special cases.
We point out below several interesting particular cases of Problem~\ref{problems1}.
\begin{itemize}
\item[(i)] 
	If $J$ and $H$ are independent of their first variables, respectively, then Problem~\ref{problems1}  reduces to the following coupled system:
	find $(u,w)\in C\times D$ such that
	\begin{equation}\label{prob1a}
	\langle A(w,u),v-u\rangle_V+J^0(\gamma_1 u;\gamma_1 (v-u))+\psi(v)-\psi(u)\ge \langle h,v-u\rangle_V
	\end{equation}
	for all $v\in C$, and
	\begin{equation}\label{prob1b}
	\langle B(u,w),z-w\rangle_E+H^0(\gamma_2 w;\gamma_2 (z-w))+\theta(z)-\theta(w)\ge \langle l,z-w\rangle_E
	\end{equation}
	for all $z\in D$.
	This kind of coupled inequalities  (\ref{prob1a})--(\ref{prob1b})
	has not been studied in the literature.
	\item[(ii)]
	When $\psi=\theta\equiv0$, then Problem~\ref{problems1} takes the following form of two coupled hemivariational inequalities: find $(u,w)\in C\times D$ satisfying
	\begin{equation}\label{prob2a}
	\langle A(w,u),v-u\rangle_V+J^0(\delta_1 w,\gamma_1 u;\gamma_1 (v-u)) \ge \langle h,v-u\rangle_V
	\end{equation}
	for all $v\in C$, and
	\begin{equation}\label{prob2b}
	\langle B(u,w),z-w\rangle_E+H^0(\delta_2 u,\gamma_2 w;\gamma_2 (z-w)) \ge \langle l,z-w\rangle_E
	\end{equation}
	for all $z\in D$.
	To the best of our knowledge, there is no results available in the literature for the system (\ref{prob2a})--(\ref{prob2b}).
	\item[(iii)]
	If $J\equiv0$ and $H\equiv0$, then  Problem~\ref{problems1} becomes the following system of coupled variational inequalities:
	find $(u,w)\in C\times D$ satisfying
	\begin{equation}\label{eqn003}
	\langle A(w,u),v-u\rangle_V+\psi(v)-\psi(u)\ge \langle h,v-u\rangle_V
	\end{equation}
	for all $v\in C$, and
	\begin{equation}\label{eqn004}
	\langle B(u,w),z-w\rangle_E+\theta(z)-\theta(w)\ge \langle l,z-w\rangle_E
	\end{equation}
	for all $z\in D$.
	This system has been considered and investigated
	in~\cite{Liu-Yang-Zeng-Zhao-2021-JOTA} in which the authors applied the Kakutani-Ky Fan fixed point theorem for multivalued operators to prove the existence of solutions to system (\ref{eqn003})--(\ref{eqn004}).
	In this paper, in contrast to~\cite{Liu-Yang-Zeng-Zhao-2021-JOTA}, we give a new proof which is based on a multivalued version of the Tychonoff fixed point principle in a Banach
	space combined with the theory of nonsmoth analysis, generalized monotonicity arguments and the Minty approach.
	\item[(iv)]
	When $\theta\equiv0$, $H\equiv0$ and $D=E$, then Problem~\ref{problems1} can be reformulated
	as the following variational-hemivariational inequality subjected to a nonlinear equation constraint:
	find $(u,w)\in C\times D$ such that
	\begin{eqnarray*}
		\left\{\begin{array}{lll}
			\langle A(w,u),v-u\rangle_V+J^0(\delta_1w,\gamma_1 u;\gamma_1 (v-u))+\psi(v)-\psi(u)\ge \langle h,v-u\rangle_V, \\[1mm]
			\hspace{8.0cm}
			\mbox{for all} \ v \in C, \\
			B(u,w)=l.
		\end{array}\right.
	\end{eqnarray*}
	\item[(v)]
	Assume that $\psi\equiv0$, $\theta\equiv0$, $J\equiv0$, $H\equiv0$, $C=V$ and $D=E$.
	Then Problem~\ref{problems1} is equivalent to the following nonlinear system of two coupled equations: find $(u,w)\in C\times D$ such that
	\begin{eqnarray*}
		\left\{\begin{array}{lll}
			A(w,u)=h, \\[2mm]
			B(u,w)=l.
		\end{array}\right.
	\end{eqnarray*}
	\item[(vi)]
	Suppose that $\theta\equiv0$, $H\equiv 0$, $D=E$ and $B$ is independent of its first variable.
	Then Problem~\ref{problems1} can be reformulated as the following parameter control system driven by a variational-hemivariational inequality:
	find $u\in C$ and $w\in W$ such that
	\begin{equation*}
	\langle A(w,u),v-u\rangle_V+J^0(\delta_1w,\gamma_1 u;\gamma_1 (v-u))+\psi(v)-\psi(u)\ge \langle h,v-u\rangle_V
	\end{equation*}
	for all $v\in C$,
	where the admissible set $W$ is defined by
	$W:=\{w\in E \mid B(w)=l\}$.
	\item[(vii)]
	If $A$ and $J$ are independent of their first variables, $B\equiv0$, $\theta\equiv0$, $H\equiv0$ and $l=0$, then Problem~\ref{problems1} reduces to the following elliptic variational-hemivariational inequality: find $x\in C$ such that
	\begin{equation}\label{NEW1}
	\langle Au, v-u \rangle_V +
	J^0(\gamma_1 (u); \gamma_1 (v-u))+\psi(v)-\psi(u)\ge \langle h,v-u\rangle_V
	\end{equation}
	for all $v\in C$.
\end{itemize}

The variational-hemivariational inequalities of the form (\ref{NEW1}) have been studied
from various perspectives. For example,
the results on noncoercive hemivariational inequalities can be found
in~\cite{Chadli-Schaible-Yao-JOTA-2004} where the equilibrium problems have been
employed, and in~\cite{Liu-Liu-Wen-Yao-Zeng-2020-AMO} where an application
to contact problems in mechanics were treated.
Several classes of variational-hemivariational and hemivariational inequalities that
model problems in contact mechanics have been also studied
in~\cite{Han-Migorski-Sofonea-2014-SIMA,smo1}.
The nonconvex star-shaped constraints sets in evolution hemivariational inequalities
have been studied in~\cite{Gasinski-Liu-Migorski-Peng}, and singular perturbations
of inequality problems were analyzed in~\cite{WeiminHanSIAM2020}.
Optimal control problems and inverse problems for the aforementioned inequalities
have been investigated in~\cite{MKZ2020IV,ZengMigorskiKhanSICON2020}.
The elliptic variational-hemivariational inequalities have been treated in~\cite{liu4,liu2},
differential hemivariational inequalities in~\cite{MigorskiZeng2018JOGO},
and related double phase obstacle problems were considered
in~\cite{Zeng-Bai-Gasinski-Winkert-CVPDEs-2020,Zeng-Bai-Gasinski-Winkert-ANONA-2021}. For other recent results on
hemivariational inequalities, we refer, for example, to~\cite{liumotreanuzengSIOP2021,LiuMotreanuZengCVPDE2019,Ovcharova-Gwinner-JOTA-2014,Tang-Huang-JOGO-2013,Xiao-Huang-JOTA-2011,ZengRadulescuWinkert,ZengBaiGasinskiWinkertCVPDE2020} and the references therein.

The rest of the paper is organized as follows.
In Section~\ref{Section2} we recall a preliminary material needed in the sequel.
Section~\ref{Section3} is devoted to state the hypotheses on the data of Problem~\ref{problems1},
and to deliver the main results of this paper which contain the nonemptiness and compactness of the solution set to Problem~\ref{problems1}.

\section{Mathematical Background}\label{Section2}
In this section, we recall a necessary preliminary material which will be used throughout the paper.
More details can be found in~\cite{barbu,DMP1,DMP2,leszek1,smo1}.

Let $(E,\|\cdot\|_E)$ be a Banach space,
$E^*$ be its dual space, and $\langle\cdot,\cdot\rangle_E$ denote the duality brackets between $E^*$ and $E$.
We adopt the symbols "$\wto$" and "$\to$"
to symbolize the weak convergence and the strong convergence in various spces, respectively.

We recall definitions and properties of upper semicontinuous multivalued ope\-ra\-tors.
\begin{definition}\label{defusc}
	Let $Y$ and $Z$ be topological spaces,
	$D\subset Y$ be a nonempty set, and
	$G\colon Y\to 2^Z$ be a multivalued map.
	\begin{description}
		\item[(i)] The map $G$ is called upper semicontinuous (u.s.c., for short) at $y\in Y,$ if for each open set $O\subset Z$ such that $G(y)\subset O$,  there exists a neighborhood $N(y)$ of $y$ satisfying $G(N(y)):=\cup_{z\in
			N(y)}G(z)\subset O$. If it holds for each $y\in D$, then $G$
		is called to be upper semicontinuous in $D$.
		\item[(ii)] The map $G$ is closed at $y\in Y$, if for every sequence $\{(y_n,z_n)\}\subset \mbox{\rm Gr} (G)$ satisfying  $(y_n,z_n)\to(y,z)$ in $Y\times Z$, it holds $(y,z)\in \mbox{\rm Gr} (G)$, where
		$\mbox{\rm Gr}(G)$ is the graph of the map
		$G$ defined by
		\begin{eqnarray*}
			\mbox{\rm Gr}(G):=\{(y,z)\in Y\times Z \mid z\in G(y)\}.
		\end{eqnarray*}
		If it holds for each $y\in Y$, then $G$
		is called to be closed (or $G$ has a closed graph).
	\end{description}
\end{definition}

Let $X_1$ and $X_2$ be two Banach spaces. A multivalued map $F\colon X_1\to 2^{X_2}$ is called sequentially weakly-weakly closed, if $F$ is sequentially closed from $X_1$ endowed with the weak topology into the subsets of $X_2$ with the weak topology.

The following result provides two useful criteria for
the upper semicontinuity of a multivalued map.
\begin{proposition}\label{uscproposition}
	Let $F\colon X\to 2^Y$ with $X$ and $Y$ topological spaces. The following conditions are equivalent:
	\begin{description}
		\item[{\rm(i)}] $F$ is upper semicontinuous.
		\item[{\rm(ii)}] For each closed set $C\subset Y$,
		$F^{-}(C):=\{x\in X\mid F(x)\cap C\neq\emptyset\}$ is closed in $X$.
		\item[{\rm(iii)}] For each open set $O\subset Y$,
		$F^{+}(O):=\{x\in X\mid F(x)\subset O\}$
		is open in $X$.
	\end{description}
\end{proposition}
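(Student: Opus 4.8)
The plan is to prove the three-way equivalence by exploiting the elementary duality between the \emph{lower inverse} $F^{-}$ and the \emph{upper inverse} $F^{+}$ under complementation, and then to read off the two topological reformulations. First I would record the key set-theoretic identity: for any $S\subseteq Y$ one has $x\in F^{+}(S)$ iff $F(x)\subseteq S$ iff $F(x)\cap(Y\setminus S)=\emptyset$ iff $x\notin F^{-}(Y\setminus S)$, so that
\begin{equation*}
F^{+}(S)=X\setminus F^{-}(Y\setminus S).
\end{equation*}
This complementation formula is the engine of the whole argument and requires no hypotheses on $X$ or $Y$.

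Next I would establish the equivalence of (ii) and (iii). Given an open set $O\subseteq Y$, put $C:=Y\setminus O$, which is closed; the identity above yields $F^{+}(O)=X\setminus F^{-}(C)$. Hence $F^{+}(O)$ is open in $X$ precisely when $F^{-}(C)$ is closed in $X$. Since $O\mapsto Y\setminus O$ is a bijection between the open and the closed subsets of $Y$, the quantified statements (ii) and (iii) are equivalent.

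Then I would prove the equivalence of (i) and (iii) directly from Definition~\ref{defusc}(i). For the implication (i)$\,\Rightarrow\,$(iii), let $O$ be open and pick any $x\in F^{+}(O)$, so that $F(x)\subseteq O$; upper semicontinuity at $x$ supplies a neighborhood $N(x)$ with $F(N(x))\subseteq O$, that is, $N(x)\subseteq F^{+}(O)$, so $x$ is interior to $F^{+}(O)$. As $x\in F^{+}(O)$ was arbitrary, $F^{+}(O)$ is open. For the converse (iii)$\,\Rightarrow\,$(i), fix $x\in X$ and an open set $O$ with $F(x)\subseteq O$; then $x\in F^{+}(O)$, and by (iii) the set $N:=F^{+}(O)$ is itself an open neighborhood of $x$ satisfying $F(N)\subseteq O$ by the very definition of $F^{+}$. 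Thus $F$ is upper semicontinuous at $x$, and $x$ being arbitrary, $F$ is upper semicontinuous.

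I do not anticipate a genuine obstacle: the whole proof is purely topological, and no compactness, separation, or metrizability assumptions are needed. The only points demanding care are the correct unwinding of the quantifiers in the definition of upper semicontinuity and the observation, in the step (iii)$\,\Rightarrow\,$(i), that $F^{+}(O)$ can serve as its own witnessing neighborhood. Once the complementation identity is in hand, the passage between the closed-set and open-set formulations becomes automatic.
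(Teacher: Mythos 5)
Your proof is correct and complete. Note that the paper does not prove Proposition~\ref{uscproposition} at all: it is recalled as standard background, with the reader referred to the monographs cited at the beginning of Section~\ref{Section2}, so there is no argument in the paper to compare against. Your route is exactly the classical one found in those references: the complementation identity $F^{+}(S)=X\setminus F^{-}(Y\setminus S)$, valid with no hypotheses on $X$, $Y$ or $F$, reduces (ii)$\,\Leftrightarrow\,$(iii) to the bijection between open and closed sets under complementation, and the equivalence (i)$\,\Leftrightarrow\,$(iii) follows by unwinding Definition~\ref{defusc}(i), with the key observation--which you state explicitly--that $F^{+}(O)$ itself serves as the witnessing neighborhood in the direction (iii)$\,\Rightarrow\,$(i).
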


The following definitions provide useful
notions from the theory of nonsmooth analysis.
\begin{definition}
	Let $V$ be a reflexive Banach space, $\psi\colon V\to\overline\real$ be a proper, convex and l.s.c. function, and $A\colon V\to 2^{V^*}$ be a multivalued operator. The operator $A$ is called to be
	\begin{enumerate}
		\item[{\rm(i)}]  $\psi$-pseudomonotone, if for any $u$, $v\in V$ fixed, there exists an element
		$u^*\in Au$ such that
		\begin{equation*}
		\langle u^*,v-u\rangle_X+\psi(v)-\psi(u)\ge 0,
		\end{equation*}
		then we have
		\begin{equation*}
		\langle v^*,v-u\rangle_X+\psi(v)-\psi(u)\ge 0
		\end{equation*}
		for all $v^*\in A(v)$.
		\item[{\rm(ii)}] stable $\psi$-pseudomonotone with respect to the set $W\subset V^*$, if $A$ and $V\ni u\mapsto Au-w\subset V^*$ are $\psi$-pseudomonotone for each $w\in W$.
	\end{enumerate}
\end{definition}

\begin{definition}\label{SUB} Let $X$ be a Banach space with the dual space $X^*$ and norm
	$\|\cdot \|_X.$  A function $J\colon X\to \real$ is called to be a locally Lipschitz continuous at $u\in X$, if there are a neighborhood $N(u)$ of $u$ and a constant $L_u>0$ with
	\begin{equation*}
	|J(w)-J(v)|\le L_u\|w-v\|_X
	\ \ \mbox{for all}\ \ w, v \in N(u).
	\end{equation*}
	Given a locally Lipschitz function $J \colon X \to \real$, the generalized (Clarke) directional derivative of $J$ at the point $u\in X$ in the direction $v\in X$, denoted by $J^0 (u; v)$, is defined by
	\begin{equation*}\label{defcalark}
	J^0(u;v) = \limsup
	\limits_{\lambda\to 0^{+}, \, w\to u} \frac{J(w+\lambda v)-J(w)}{\lambda}.
	\end{equation*}
	Further, the generalized subgradient of $J \colon X \to \mathbb{R}$ at $u\in X$ is given by
	\begin{equation*}
	\partial J(u) = \{\, \xi\in X^{*} \mid J^0 (u; v)\ge
	\langle\xi, v\rangle_{X^*\times X} \ \ \mbox{\rm for all} \ \ v \in X \, \}.
	\end{equation*}
\end{definition}

The generalized subgradient and generalized directional derivative of a locally Lipschitz function enjoy nice properties and rich calculus. Here, we summarize some basic results (see for example~\cite[Proposition 3.23]{smo1}).
\begin{proposition}\label{subdiff}
	Let $J \colon X \to \real$ be a locally Lipschitz function.
	Then
	\begin{description}
		\item[{\rm (i)}] For every $x \in X$, the function
		$X \ni v \mapsto J^0(x;v) \in \real$ is positively  homogeneous and subadditive, i.e.,
		$J^0(x; \lambda v) = \lambda J^0(x; v)$ for all
		$\lambda \ge 0$, $v\in X$,
		and $J^0 (x; v_1 + v_2) \le
		J^0(x; v_1) + J^0(x; v_2)$ for all $v_1$, $v_2 \in
		X$, respectively.
		\item[{\rm (ii)}]
		For every $x\in X$, the set $\partial J(x)$ is a nonempty, convex, and weakly* compact subset of $X^*$, which is bounded by the Lipschitz constant $L_x>0$ of $J$ near $x$.
		\item[{\rm (iii)}]
		The graph of the generalized subgradient operator $\partial J$ of $J$ is closed in $X\times (w^*\mbox{--}X^*)$ topology, i.e.,
		if $\{x_n\}\subset X$ and $\{\xi_n\}\subset X^*$ are sequences such that $\xi_n\in \partial J(x_n)$ and $x_n\to x$ in $X$, $\xi_n \wto \xi$ in $X^*$, then $\xi\in \partial J(x)$, where, recall, $(w^*\mbox{--}X^*)$ denotes the space
		$X^*$ equipped with weak* topology.
	\end{description}
\end{proposition}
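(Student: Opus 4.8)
The plan is to establish the three assertions in turn, relying only on the definitions of $J^0$ and $\partial J$ together with the local Lipschitz property and two classical functional-analytic tools, the Hahn--Banach theorem and the Banach--Alaoglu theorem.

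For part (i), I would argue directly from the definition of the generalized directional derivative. Positive homogeneity for $\lambda > 0$ follows by the change of variable $s = \lambda t$ in the difference quotient, which turns $\frac{J(w + t\lambda v) - J(w)}{t}$ into $\lambda \frac{J(w + sv) - J(w)}{s}$ and leaves the $\limsup$ otherwise unchanged, while the case $\lambda = 0$ is trivial. For subadditivity, I would split the increment as $J(w + t(v_1 + v_2)) - J(w) = [J(w + tv_1 + tv_2) - J(w+tv_1)] + [J(w + tv_1) - J(w)]$, observe that $w + tv_1 \to x$ as $w \to x$ and $t \to 0^+$, and apply the elementary inequality $\limsup(a_n + b_n) \le \limsup a_n + \limsup b_n$ to obtain $J^0(x; v_1 + v_2) \le J^0(x; v_1) + J^0(x; v_2)$.

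For part (ii), the starting observation is that the local Lipschitz condition gives $|J(w + tv) - J(w)| \le L_x t \|v\|_X$ for $w$ near $x$ and $t$ small, hence $|J^0(x; v)| \le L_x \|v\|_X$; in particular $v \mapsto J^0(x; v)$ is a finite, sublinear (by part (i)) functional on $X$. Nonemptiness of $\partial J(x)$ then follows from the Hahn--Banach theorem, which produces a linear functional $\xi$ dominated by the sublinear map $J^0(x; \cdot)$, so that $\xi \in \partial J(x)$. Convexity is immediate from the defining inequality, and the bound $\langle \xi, v\rangle_{X^* \times X} \le J^0(x; v) \le L_x \|v\|_X$ shows $\|\xi\|_{X^*} \le L_x$, so $\partial J(x)$ is bounded. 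Since $\partial J(x)$ is an intersection of the weak* closed half-spaces $\{\xi \mid \langle \xi, v\rangle_{X^* \times X} \le J^0(x; v)\}$, it is weak* closed, and a bounded weak* closed subset of $X^*$ is weak* compact by Banach--Alaoglu.

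For part (iii), I would start from $\langle \xi_n, v\rangle_{X^* \times X} \le J^0(x_n; v)$ for all $v \in X$, valid since $\xi_n \in \partial J(x_n)$. The left-hand side converges to $\langle \xi, v\rangle_{X^* \times X}$ because $\xi_n \wto \xi$ in $X^*$, so everything reduces to controlling the $\limsup$ on the right. \textbf{The main obstacle is precisely this last step:} one needs the upper semicontinuity of the map $(u, v) \mapsto J^0(u; v)$, equivalently the inequality $\limsup_{n\to\infty} J^0(x_n; v) \le J^0(x; v)$ whenever $x_n \to x$. This is itself extracted from the definition of $J^0$ as a $\limsup$ over $w \to u$ and $\lambda \to 0^+$ by a neighborhood/diagonal argument, and once it is available, passing to the limit in the displayed inequality yields $\langle \xi, v\rangle_{X^* \times X} \le J^0(x; v)$ for every $v \in X$, that is $\xi \in \partial J(x)$, which is exactly the claimed closedness of the graph in the $X \times (w^*\mbox{--}X^*)$ topology.
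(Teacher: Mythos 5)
Your proposal is correct and matches the paper's treatment: the paper does not prove this proposition at all, quoting it as a standard result with a citation to \cite[Proposition~3.23]{smo1}, and your argument (change of variables and splitting of the difference quotient for (i), Hahn--Banach plus Banach--Alaoglu for (ii), and the diagonal $\limsup$ selection giving $\limsup_{n}J^0(x_n;v)\le J^0(x;v)$ for (iii)) is precisely the classical proof found in that reference and in Clarke's book. There are no gaps: the step you flag as the main obstacle in (iii) is indeed settled exactly by the neighborhood/diagonal argument you describe, and since weak convergence of $\xi_n$ in $X^*$ implies weak$^*$ convergence, your argument covers the stated form of the closedness claim.
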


We end the section by recalling a multivalued version
of the Tychonoff fixed point principle in a Banach
space, its proof can be found in~\cite[Theorem~8.6]{GranasDugundji}.
\begin{theorem}\label{TychonoffFPT}
	Let $C$ be a bounded, closed and convex subset
	of a reflexive Banach space $E$,
	and $S \colon  C \to 2^C$ be a multivalued map such that
	\begin{description}
		\item[\rm (i)] $S$ has bounded, closed and convex values,
		\item[\rm (ii)] $S$ is weakly-weakly u.s.c..
	\end{description}
	Then $S$ has a fixed point in $C$.
\end{theorem}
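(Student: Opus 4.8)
The plan is to move the whole problem into the weak topology of $E$, where the bounded closed convex set $C$ becomes genuinely compact, and then to invoke the classical multivalued Fan--Glicksberg fixed point theorem for compact convex subsets of a Hausdorff locally convex topological vector space. Since $E$ equipped with its weak topology is precisely such a space, the theorem will follow once I verify that $C$, carried with the weak topology, is compact and convex and that $S$, viewed as a set-valued self-map of this space, meets the Fan--Glicksberg hypotheses.

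First I would establish the weak compactness of $C$. Being convex and norm-closed, $C$ is weakly closed by Mazur's theorem; being bounded, it sits inside a ball $r\,B_E$ (with $B_E$ the closed unit ball of $E$), and reflexivity forces $r\,B_E$ to be weakly compact. A weakly closed subset of a weakly compact set is weakly compact, so $C$ is weakly compact. The identical argument applied to each value $S(x)$ --- which is nonempty (as is implicit for a fixed-point statement), bounded, closed and convex --- shows that every $S(x)$ is a nonempty, convex, weakly compact subset of $C$.

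Next I would note that hypothesis (ii) is, by definition, the upper semicontinuity of $S$ in the sense of Definition~\ref{defusc}(i) when both the domain and the target carry the weak topology; equivalently, by Proposition~\ref{uscproposition}, $S^{-}(F)$ is weakly closed in $C$ for every weakly closed $F\subset C$. Combined with the nonempty convex weakly compact values obtained above, this supplies exactly the data required by the Fan--Glicksberg theorem on the compact convex space $C$, producing a point $u\in C$ with $u\in S(u)$, which is the desired fixed point.

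The main obstacle, and the reason one cannot simply recycle sequential fixed-point machinery, is that the weak topology on $C$ is not metrizable in general (it is metrizable only when $E^*$ is separable). Hence upper semicontinuity must be treated as the genuine topological notion of Definition~\ref{defusc}, not through sequences, and ordinary sequential compactness would only be available via the Eberlein--\v{S}mulian theorem. If one prefers a self-contained argument in place of quoting Fan--Glicksberg, the underlying engine is Kakutani's finite-dimensional fixed point theorem together with a limiting procedure over the directed family of finite-dimensional sections of $C$: one restricts and approximates $S$ on finite-dimensional faces, collects the resulting fixed points, and extracts a cluster point using the weak compactness of $C$ and the closed-graph reformulation of upper semicontinuity. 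Showing that this cluster point actually satisfies $u\in S(u)$ is the delicate step, and it is precisely the combination of upper semicontinuity with convex, weakly compact values that guarantees the inclusion survives the passage to the limit.
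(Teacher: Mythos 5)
The paper does not actually prove this statement: it is quoted as a known result, with the proof deferred to \cite[Theorem~8.6]{GranasDugundji}. Your argument is the standard derivation of exactly that result --- Mazur's theorem plus reflexivity make $C$ and the (implicitly nonempty) values of $S$ weakly compact and convex, the weak topology turns $E$ into a Hausdorff locally convex space, and the Fan--Glicksberg theorem then produces the fixed point --- so your proposal is correct and coincides in substance with the justification the paper points to.
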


\section{Main Results}\label{Section3}

This section is devoted to the main results of the paper which include the nonemptiness and compactness of the solution set to Problem~\ref{problems1}.

Before we state and prove the results, we make the following hypotheses on the data of Problem~\ref{problems1}.

\medskip
\noindent ${\underline{H(0)}}$: $C$ and $D$ are nonempty, closed and convex subsets of $V$ and $E$, respectively.

\medskip

\noindent  ${\underline{H(1)}}$: $h\in V^*$ and $l\in E^*$.

\medskip

\noindent  ${\underline{H(2)}}$: $\gamma_1\colon V\to X$, $\gamma_2\colon E\to Y$, $\delta_1\colon E\to Z_1$ and $\delta_2\colon V\to Z_2$ are bounded, linear and compact.

\medskip

\noindent  ${\underline{H(\psi)}}$: $\psi\colon
V\to \overline\real$ is a convex and lower semicontinuous function such that

dom$(\psi):=\{u\in V \mid \psi(u)<+\infty\}\cap C\neq\emptyset$.

\medskip

\noindent  ${\underline{H(\theta)}}$: $\theta\colon
E\to \overline\real$ is a convex and lower semicontinuous function such that

dom$(\theta):=\{w\in E \mid \theta(w)<+\infty\}\cap D\neq\emptyset$.

\medskip

\noindent ${\underline{H(J)}}$: $J\colon Z_1\times X\to\real$ is such that
\begin{itemize}
	\item[(i)] for every $w\in Z_1$, $X\ni u\mapsto J(w,u)\in\real$ is locally Lipschitz continuous.
	\item[(ii)] there exists a constant $c_J\ge 0$ such that
	\begin{equation*}
	\|\xi\|_{X^*}\le c_J\left(1+\|u\|_X+\|w\|_{Z_1}\right)
	\end{equation*}
	for all $\xi\in \partial J(w,u)$, $u\in X$ and $w\in Z_1$.
	\item[(iii)] the following inequality is valid
	\begin{equation*}
	\limsup_{n\to\infty} J^0(w_n,u;v)\le J^0(w,u;v),
	\end{equation*}
	whether $u$, $v$ and sequence  $\{w_n\}\subset Z_1$ is such that
	\begin{equation*}
	w_n\to w\mbox{ in $Z_1$ as $n\to\infty$}
	\end{equation*}
	for some $w\in Z_1$.
\end{itemize}

\medskip

\noindent ${\underline{H(H)}}$: $H\colon Z_2\times Y\to\real$ is such that
\begin{itemize}
	\item[(i)] for every $u\in Z_2$, $Y\ni w\mapsto H(u,w)\in\real$ is locally Lipschitz continuous.
	\item[(ii)] there exists a constant $c_H\ge 0$
	such that
	\begin{equation*}
	\|\eta\|_{Y^*}\le c_H\left(1+\|u\|_{Z_2}+\|w\|_Y\right)
	\end{equation*}
	for all $\eta\in \partial H(u,w)$, $u\in Z_2$ and $w\in Y$.
	\item[(iii)] the following inequality is valid
	\begin{equation*}
	\limsup_{n\to\infty} H^0(u_n,w;z)\le H^0(u,w;z),
	\end{equation*}
	whether $w$, $z\in Y$ and sequence  $\{w_n\}\subset Z_2$ is such that
	\begin{equation*}
	w_n\to w\mbox{ in $Z_2$ as $n\to\infty$}
	\end{equation*}
	for some $w \in Z_2$.
\end{itemize}

\medskip

\noindent ${\underline{H(A)}}$: $A\colon E\times V\to V^*$ satisfies the following conditions:
\begin{itemize}
	\item[(i)] for any $w\in E$ and $v$, $u\in V$, the following inequality holds
	\begin{equation*}
	\limsup_{\lambda\to0}\langle A(w,t v+(1-t)u),v-u\rangle_V\le\langle A(w,u),v-u\rangle_V.
	\end{equation*}
	\item[(ii)] for any $w\in E$ fixed, the multivalued mapping $V\ni u\mapsto A(w,u)+\gamma_1^*\partial J(\delta_1w,\gamma_1u)\subset V^*$ is stable $\psi$-pseudomonotone with respect to $\{h\}$.
	\item[(iii)] if $\{w_n\}\subset E$ and $\{u_n\}\subset V$ are such that
	\begin{equation*}
	w_n\wto w\mbox{ in $E$ and $u_n\wto u$ in $V$ as $n\to\infty$}
	\end{equation*}
	for some $(w,u)\in E\times V$, then we have
	\begin{equation*}
	\limsup_{n\to\infty}\langle A(w_n,v),v-u_n\rangle_V\le \langle A(w,v),v-u\rangle_V.
	\end{equation*}
	\item[(iv)]   the following growth condition is satisfied
	\begin{equation*}
	\|A(w,u)\|_{V^*}\le b_A(1+\|u\|_V+\|w\|_E)
	\end{equation*}
	for all $(w,u)\in E\times V$ with some $b_A>0$.
	\item[(v)] there exists a function $r_A\colon \real_+\times \real_+\to \real$ such that
	\begin{equation*}
	\langle A(w,u),u\rangle_V
	-J^0(\delta_1 w,\gamma_1u;-\gamma_1u)\ge r_A(\|u\|_V,\|w\|_E)\|u\|_V\mbox{ for all $u\in V$ and $w\in E$},
	\end{equation*}
	and
	\begin{itemize}
		\item[$\bullet$] for every nonempty and bounded set $O\subset \real_+$, we have $r_A(t,s)\to +\infty$ as $t\to +\infty$ for all $s\in O$,
		\item[$\bullet$] for any constants $c_1$, $c_2\ge 0$, it holds $r_A(t,c_1t+c_2)\to +\infty$ as $t\to +\infty$,
		\item[$\bullet$] for sequences $\{s_n\}\subset \real_+$ and $\{t_n\}\subset$  such that
		\begin{equation*}
		s_n\to +\infty,\mbox{ $t_n\to+\infty$ and $\frac{t_n}{s_n}\to 0$ as $n\to\infty$},
		\end{equation*}
		we have
		\begin{equation*}
		r_A(s_n,t_n)\to +\infty\mbox{ as $n\to\infty$}.
		\end{equation*}
	\end{itemize}
\end{itemize}

\medskip

\noindent ${\underline{H(B)}}$: $B\colon V\times E\to E^*$ satisfies the following conditions:
\begin{itemize}
	\item[(i)] for any $u\in V$ and $z$, $w\in E$,
	it holds
	\begin{equation*}
	\limsup_{\lambda\to0}\langle B(u,t z+(1-t)w),z-w\rangle_E\le\langle B(u,w),z-w\rangle_E.
	\end{equation*}
	\item[(ii)] for each $u\in V$ fixed, the multivalued mapping $E\ni w\mapsto B(u,w)+\gamma_2^*\partial H(\delta_2u,\gamma_2w)\subset E^*$ is stable $\theta$-pseudomonotone with respect to $\{l\}$.
	\item[(iii)] if $\{w_n\}\subset E$ and $\{u_n\}\subset V$ are such that
	\begin{equation*}
	w_n\wto w\mbox{ in $E$ and $u_n\wto u$ in $V$ as $n\to\infty$}
	\end{equation*}
	for some $(w,u)\in E\times V$, then we have
	\begin{equation*}
	\limsup_{n\to\infty}\langle B(u_n,z),z-w_n\rangle_E\le \langle B(u,z),z-w\rangle_E.
	\end{equation*}
	\item[(iv)]  the following growth condition is satisfied
	\begin{equation*}
	\|B(u,w)\|_{E^*}\le b_B(1+\|u\|_V+\|w\|_E)
	\end{equation*}
	for all $(w,u)\in E\times V$ with some $b_B>0$.
	\item[(v)] there exists a function $r_B\colon \real_+\times \real_+\to \real$ such that
	\begin{equation*}
	\langle B(u,w),w\rangle_E-H^0(\delta_2 u,\gamma_2w;-\gamma_2w)\ge r_B(\|w\|_E,\|u\|_V)\|w\|_E\mbox{ for all $u\in V$ and $w\in E$},
	\end{equation*}
	and
	\begin{itemize}
		\item[$\bullet$] for every nonempty and bounded set $O\subset \real_+$, we have $r_B(t,s)\to +\infty$ as $t\to +\infty$ for all $s\in O$,
		\item[$\bullet$] for any constants $c_1$, $c_2\ge 0$, it holds $r_B(t,c_1t+c_2)\to +\infty$ as $t\to +\infty$,
		\item[$\bullet$] for sequences $\{s_n\}\subset \real_+$ and $\{t_n\}\subset$  such that
		\begin{equation*}
		s_n\to +\infty,\mbox{ $t_n\to+\infty$ and $\frac{t_n}{s_n}\to 0$ as $n\to\infty$},
		\end{equation*}
		we have
		\begin{equation*}
		r_B(s_n,t_n)\to +\infty\mbox{ as $n\to\infty$}.
		\end{equation*}
	\end{itemize}
\end{itemize}

\begin{theorem}\label{theorems1}
	Under the assumptions $H(A)$, $H(B)$, $H(0)$, $H(1)$,  $H(2)$, $H(J)$, $H(H)$, $H(\psi)$ and $H(\theta)$, the set of solutions to Problem~$\ref{problems1}$,
	denoted by ${\mathbb S}(h,l)$,
	is nonempty and weakly compact in $V\times E$.
\end{theorem}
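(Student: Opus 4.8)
The plan is to \emph{decouple} Problem~\ref{problems1} and to recast the search for a joint solution as a fixed point problem for a multivalued self map, to which the Tychonoff principle (Theorem~\ref{TychonoffFPT}) applies. For a fixed $w\in E$, regard \eqref{eqn1} as a single variational--hemivariational inequality in the unknown $u\in C$ and denote by $\Sigma_1(w)\subset C$ its solution set; symmetrically, for a fixed $u\in V$ let $\Sigma_2(u)\subset D$ be the solution set of \eqref{eqn2}. Setting $\Lambda(u,w):=\Sigma_1(w)\times\Sigma_2(u)$, a pair $(u,w)$ is a fixed point of $\Lambda$ exactly when $u\in\Sigma_1(w)$ and $w\in\Sigma_2(u)$, i.e.\ when $(u,w)$ solves Problem~\ref{problems1}. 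It therefore suffices to exhibit a bounded, closed and convex set $K=K_1\times K_2\subset C\times D$ with $\Lambda(K)\subset 2^{K}$ and to verify the hypotheses of Theorem~\ref{TychonoffFPT} on $K$.

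The first step is to show that each value $\Sigma_1(w)$ (and $\Sigma_2(u)$) is nonempty, bounded, closed and convex. Writing $J^0(\delta_1w,\gamma_1u;\gamma_1(v-u))=\max\{\langle\gamma_1^{*}\xi,v-u\rangle_V:\xi\in\partial J(\delta_1w,\gamma_1u)\}$, inequality \eqref{eqn1} becomes an inclusion for the operator $u\mapsto A(w,u)+\gamma_1^{*}\partial J(\delta_1w,\gamma_1u)$, whose stable $\psi$-pseudomonotonicity is assumed in $H(A)$(ii). Pseudomonotonicity together with the hemicontinuity $H(A)$(i) yields the \emph{Minty equivalence}: $u\in\Sigma_1(w)$ iff $\langle A(w,v)-h,v-u\rangle_V-J^0(\delta_1w,\gamma_1v;\gamma_1(u-v))+\psi(v)-\psi(u)\ge0$ for all $v\in C$. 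In this Minty form the unknown $u$ enters only through affine and concave terms — the pairing is affine in $u$, $-\psi$ is concave, and $-J^0(\delta_1w,\gamma_1v;\gamma_1(u-v))$ is concave because $J^0(x;\cdot)$ is subadditive and positively homogeneous by Proposition~\ref{subdiff}(i) — so $\Sigma_1(w)$ is an intersection of convex closed sets, hence convex and closed. Nonemptiness follows from a surjectivity argument for the coercive pseudomonotone inclusion, the coercivity being supplied by $H(A)$(v).

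Next I would establish the a priori bounds. Testing \eqref{eqn1} with a fixed $v_0\in C\cap\mathrm{dom}\,\psi$ (nonempty by $H(\psi)$), splitting $J^0$ by subadditivity, and using the growth $H(A)$(iv), $H(J)$(ii) and the coercivity $H(A)$(v), one arrives at $r_A(\|u\|_V,\|w\|_E)\,\|u\|_V\le\alpha(1+\|u\|_V+\|w\|_E)$, and symmetrically $r_B(\|w\|_E,\|u\|_V)\,\|w\|_E\le\beta(1+\|u\|_V+\|w\|_E)$. The three-part growth hypothesis on $r_A$ and $r_B$ is tailored precisely to these inequalities: the first bullet bounds $\|u\|_V$ when $\|w\|_E$ ranges over a ball (and conversely), which produces the invariant set $K$ for $\Lambda$; the second and third bullets, through a case analysis according to whether $\|w\|_E/\|u\|_V$ tends to $0$, to a positive finite limit, or to $+\infty$, exclude simultaneous blow up and thus deliver a global bound for the whole solution set.

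The \textbf{main obstacle} is the weak--weak upper semicontinuity of $\Lambda$. Since the values of $\Lambda$ lie in a fixed, bounded, hence weakly compact subset of the reflexive space $V\times E$, it is enough, by a standard closed-graph criterion (cf.\ Proposition~\ref{uscproposition}), to prove that $\Lambda$ has a weakly--weakly closed graph. If $w_n\wto w$ in $E$ and $\Sigma_1(w_n)\ni u_n\wto u$ in $V$, the compactness in $H(2)$ gives the strong convergences $\delta_1w_n\to\delta_1w$ in $Z_1$ and $\gamma_1u_n\to\gamma_1u$ in $X$; passing to the limit in the \emph{Minty} formulation of \eqref{eqn1} — where $A$ is evaluated at the fixed test point $v$, so that $H(A)$(iii) applies — and invoking $H(J)$(iii) together with the weak lower semicontinuity of $\psi$ shows $u\in\Sigma_1(w)$, and symmetrically for $\Sigma_2$. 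The crux is that one cannot pass to the limit directly in \eqref{eqn1}, whose operator is evaluated at the unknown; the Minty equivalence of the second step is what makes the limit tractable. Theorem~\ref{TychonoffFPT} then produces a fixed point, so $\mathbb{S}(h,l)\neq\emptyset$. Finally, the global a priori bound shows $\mathbb{S}(h,l)$ is bounded, and the same weak closedness argument applied to the full coupled system shows it is weakly closed; being a bounded and weakly closed subset of the reflexive space $V\times E$, it is weakly compact.
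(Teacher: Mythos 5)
Your proposal is correct and follows essentially the same route as the paper's own proof: the same decoupling into solution maps $\mathcal P(w)\times\mathcal Q(u)$ (your $\Sigma_1(w)\times\Sigma_2(u)$) with the same product fixed-point map, the same Tychonoff principle, the same coercivity-based a priori estimates with the case analysis on the ratio of norms, and the same pseudomonotonicity/Minty argument to pass to the weak limit in the inequality with the operator frozen at the test point. The only notable (and welcome) difference is that you make the convexity and closedness of the values explicit via the Minty formulation, whereas the paper obtains the needed properties of the values by citing an external theorem.
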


\begin{proof}
	The proof of this theorem is divided into five steps.
	
	\noindent
	{\bf Step 1.} {\it If the set ${\mathbb S}(h,l)$ of solutions to Problem~$\ref{problems1}$
		is nonempty, then ${\mathbb S}(h,l)$ is bounded.}
	
	Assume that ${\mathbb S}(h,l)$  is nonempty. Let $(u,w)\in {\mathbb S}(h,l)$, $u_0\in \mbox{dom}\psi\cap C$ and $w_0\in \mbox{dom}\theta\cap D$ be arbitrary fixed.  Then, we have
	\begin{equation*}
	\langle A(w,u),u_0-u\rangle_V+J^0(\delta_1 w,\gamma_1 u;\gamma_1 (u_0-u))+\psi(u_0)-\psi(u)\ge \langle h,u_0-u\rangle_V,
	\end{equation*}
	and
	\begin{equation*}
	\langle B(u,w),w_0-w\rangle_E+H^0(\delta_2 u,\gamma_2 w;\gamma_2 (w_0-w))+\theta(w_0)-\theta(w)\ge \langle l,w_0-w\rangle_E.
	\end{equation*}
	From the subadditivity of $x\mapsto J^0(w,u;x)$ and
	$x\mapsto H^0(u,w;x)$, we have
	\begin{align*}
	&\langle A(w,u),u\rangle_V-J^0(\delta_1 w,\gamma_1 u;-\gamma_1 u)\\
	\le & \langle A(w,u),u_0\rangle_V+J^0(\delta_1 w,\gamma_1 u;\gamma_1 u_0)+\psi(u_0)-\psi(u)- \langle h,u_0-u\rangle_V,
	\end{align*}
	and
	\begin{align*}
	&\langle B(u,w),w\rangle_E-H^0(\delta_2 u,\gamma_2 w;-\gamma_2 w)\\
	\le & \langle B(u,w),w_0\rangle_E+H^0(\delta_2 u,\gamma_2 w;\gamma_2 w_0)+\theta(w_0)-\theta(w)- \langle l,w_0-w\rangle_E.
	\end{align*}
	Let $\xi\in X^*$ and $\eta\in Y^*$ be such that
	\begin{align*}
	&\xi\in\partial J(\delta_1w,\gamma_1 u)
	\ \mbox{  and  }\
	\langle\xi,\gamma_1 u_0\rangle_{X}=J^0(\delta_1 w,\gamma_1 u;\gamma_1 u_0),\\
	&\eta\in\partial H(\delta_2u,\gamma_2 w)
	\ \mbox{  and  } \ \langle\eta,\gamma_2 w_0\rangle_{Y}=H^0(\delta_2 u,\gamma_2 w;\gamma_1 w_0).
	\end{align*}
	Recall that $\psi$ and $\theta$ are convex and l.s.c., so, we invoke~\cite[Proposition~5.2.25]{DMP1}
	to find constants $\alpha_\psi$, $\alpha_\theta$, $\beta_\psi$, $\beta_\theta\ge 0$ such that
	\begin{equation*}
	\psi(u)\ge -\alpha_\psi\|u\|_V-\beta_\psi
	\ \mbox{ and }\
	\theta(w)\ge -\alpha_\theta\|w\|_E-\beta_\theta
	\end{equation*}
	for all $(u,w)\in V\times E$.
	We apply the inequalities above and hypotheses $H(A)$(iv)--(v) and  $H(B)$(iv)--(v) to infer that
	\begin{align*}
	&r_A(\|u\|_V,\|w\|_E)\|u\|_V\nonumber\\
	\le &\langle A(w,u),u\rangle_V-J^0(\delta_1 w,\gamma_1 u;-\gamma_1 u)\nonumber\\
	\le &\langle A(w,u),u_0\rangle_V+\langle\xi,\gamma_1 u_0\rangle_{X}+\psi(u_0)-\psi(u)- \langle h,u_0-u\rangle_V\nonumber\\
	\le &b_A(1+\|u\|_V+\|w\|_E)\|u_0\|_V+ c_J(1+\|\gamma_1u\|_X+\|\delta_1w\|_{Z_1})\|\gamma _1u_0\|_X\nonumber\\
	&+\psi(u_0)-\psi(u)+\|h\|_{V^*}\left(\|u_0\|_V+\|u\|_V\right)\nonumber\\
	\le &b_A(1+\|u\|_V+\|w\|_E)\|u_0\|_V+ c_J(1+\|\gamma_1\|\|u\|_V+\|\delta_1\|\|w\|_E)\|\gamma_1\|\|u_0\|_V\nonumber\\
	&+\psi(u_0)+\alpha_\psi\|u\|_V+\beta_\psi+\|h\|_{V^*}\left(\|u_0\|_V+\|u\|_V\right),
	\end{align*}
	and
	\begin{align*}
	&r_B(\|w\|_E,\|u\|_V)\|w\|_E\nonumber\\
	\le &\langle B(u,w),w\rangle_E-H^0(\delta_2 u,\gamma_2 w;-\gamma_2 w)\nonumber\\
	\le &\langle B(u,w),w_0\rangle_E+H^0(\delta_2 u,\gamma_2 w;\gamma_w w_0)+\theta(w_0)-\theta(w)- \langle l,w_0-w\rangle_E\nonumber\\
	\le &b_B(1+\|u\|_V+\|w\|_E)\|w_0\|_E+ c_H(1+\|\delta_2\|\|u\|_V+\|\gamma_2\|\|w\|_E)\|\gamma_2\|\|w_0\|_E\nonumber\\
	&+\theta(w_0)+\alpha_\theta\|w\|_E+\beta_\theta+\|l\|_{E^*}\left(\|w_0\|_E+\|w\|_E\right).
	\end{align*}
	Hence, we have
	\begin{align}\label{eqn3}
	&r_A(\|u\|_V,\|w\|_E) \nonumber\\
	\le &\frac{b_A(1+\|u\|_V+\|w\|_E)\|u_0\|_V+ c_J(1+\|\gamma_1\|\|u\|_V+\|\delta_1\|\|w\|_E)\|\gamma_1\|\|u_0\|_V}{\|u\|_V}\nonumber\\
	&+\frac{\psi(u_0)+\alpha_\psi\|u\|_V+\beta_\psi+\|h\|_{V^*}\left(\|u_0\|_V+\|u\|_V\right)}{\|u\|_V},
	\end{align}
	and
	\begin{align}\label{eqn4}
	&r_B(\|w\|_E,\|u\|_V)\nonumber\\
	\le &\frac{b_B(1+\|u\|_V+\|w\|_E)\|w_0\|_E+ c_H(1+\|\delta_2\|\|u\|_V+\|\gamma_2\|\|w\|_E)\|\gamma_2\|\|w_0\|_E}{\|w\|_E}\nonumber\\
	&+\frac{\theta(w_0)+\alpha_\theta\|w\|_E+\beta_\theta+\|l\|_{E^*}\left(\|w_0\|_E+\|w\|_E\right)}{\|w\|_E}.
	\end{align}
	
	Assume that ${\mathbb S}(h,l)$ is unbounded. Without any loss of generality, we may suppose that there exists a sequence
	$\{(u_n,w_n)\}\subset {\mathbb S}(h,l)$ satisfying $\|u_n\|_V+\|w_n\|_E\to \infty$ as $n\to\infty$, namely, one of the following conditions holds:
	\begin{equation}\label{eqn5}
	\|u_n\|_V\uparrow+\infty\mbox{ as $n\to\infty$ and $\{w_n\}$ is bounded in $E$},
	\end{equation}
	or
	\begin{equation}\label{eqn6}
	\|w_n\|_E\uparrow+\infty\mbox{ as $n\to\infty$ and $\{u_n\}$ is bounded in $V$},
	\end{equation}
	or
	\begin{equation}\label{eqn7}
	\|w_n\|_E\uparrow+\infty\mbox{ and $\|u_n\|_V\uparrow+\infty$ as $n\to\infty$}.
	\end{equation}
	If (\ref{eqn5}) is true, then from (\ref{eqn3}) we obtain
	\begin{align*}
	&r_A(\|u_n\|_V,\|w_n\|_E) \nonumber\\
	\le &\frac{b_A(1+\|u_n\|_V+\|w_n\|_E)\|u_0\|_V+ c_J(1+\|\gamma_1\|\|u_n\|_V+\|\delta_1\|\|w_n\|_E)\|\gamma_1\|\|u_0\|_V}{\|u_n\|_V}\nonumber\\
	&+\frac{\psi(u_0)+\alpha_\psi\|u_n\|_V+\beta_\psi+\|h\|_{V^*}\left(\|u_0\|_V+\|u_n\|_V\right)}{\|u_n\|_V}.
	\end{align*}
	Taking the limit as $n\to\infty$ in the inequality above and using assumption $H(A)$(v),
	it yields
	\begin{align*}
	&+\infty=\lim_{n\to\infty}r_A(\|u_n\|_V,\|w_n\|_E) \nonumber\\
	\le &\lim_{n\to\infty}\bigg[\frac{b_A(1+\|u_n\|_V+\|w_n\|_E)\|u_0\|_V+ c_J(1+\|\gamma_1\|\|u_n\|_V+\|\delta_1\|\|w_n\|_E)\|\gamma_1\|\|u_0\|_V}{\|u_n\|_V}\nonumber\\
	&+\frac{\psi(u_0)+\alpha_\psi\|u_n\|_V+\beta_\psi+\|h\|_{V^*}\left(\|u_0\|_V+\|u_n\|_V\right)}{\|u_n\|_V}\bigg]\\
	=&\, b_A\|u_0\|_V+c_J\|\gamma_1\|^2\|u_0\|_V+\alpha_\psi+\|h\|_{V^*}.
	\end{align*}
	This leads to a contradiction. So, we conclude that ${\mathbb S}(h,l)$ is bounded. Likewise, we can employ the same argument to obtain a contradiction when (\ref{eqn6}) occurs.
	If (\ref{eqn7}) holds, we distinguish further the following cases:
	\begin{itemize}
		\item[i)] $\frac{\|u_n\|_V}{\|w_n\|_E}\to +\infty$ or $\frac{\|w_n\|_E}{\|u_n\|_E}\to +\infty$ as $n\to\infty$.
		\item[ii)] there exists $n_0\in\mathbb N$ such that $0<c_0\le \frac{\|u_n\|_V}{\|w_n\|_E} \le c_1$ for all $n\ge n_0$ for some $c_0,c_1>0$.
	\end{itemize}
	Concerning the case i), we only examine the situation  if
	$\frac{\|u_n\|_V}{\|w_n\|_E}\to +\infty$, because the same conclusion can be obtained by using a similar proof when $\frac{\|w_n\|_E}{\|u_n\|_E}\to +\infty$ as $n\to\infty$.
	Keeping in mind (\ref{eqn3}), one has
	\begin{align*}
	&r_A(\|u_n\|_V,\|w_n\|_E) \nonumber\\
	\le &\frac{b_A(1+\|u_n\|_V+\|w_n\|_E)\|u_0\|_V+ c_J(1+\|\gamma_1\|\|u_n\|_V+\|\delta_1\|\|w_n\|_E)\|\gamma_1\|\|u_0\|_V}{\|u_n\|_V}\nonumber\\
	&+\frac{\psi(u_0)+\alpha_\psi\|u_n\|_V+\beta_\psi+\|h\|_{V^*}\left(\|u_0\|_V+\|u_n\|_V\right)}{\|u_n\|_V}.
	\end{align*}
	By virtue of (\ref{eqn7}) and the condition i),
	we infer
	\begin{align*}
	&+\infty=\lim_{n\to\infty}r_A(\|u_n\|_V,\|w_n\|_E) \nonumber\\
	\le &\lim_{n\to\infty}\bigg[\frac{b_A(1+\|u_n\|_V+\|w_n\|_E)\|u_0\|_V+ c_J(1+\|\gamma_1\|\|u_n\|_V+\|\delta_1\|\|w_n\|_E)\|\gamma_1\|\|u_0\|_V}{\|u_n\|_V}\nonumber\\
	&+\frac{\psi(u_0)+\alpha_\psi\|u_n\|_V+\beta_\psi+\|h\|_{V^*}\left(\|u_0\|_V+\|u_n\|_V\right)}{\|u_n\|_V}\bigg]\\
	=&\, b_A\|u_0\|_V+c_J\|\gamma_1\|^2\|u_0\|_V+\alpha_\psi+\|h\|_{V^*}.
	\end{align*}
	This leads to a contradiction too.
	This implies that ${\mathbb S}(h,l)$ is bounded. Moreover, if the condition ii) holds,
	then we can use hypothesis $H(B)$(v) to get
	\begin{align*}
	&+\infty=\lim_{n\to\infty}r_B(\|w_n\|_E,\|u_n\|_V)\nonumber\\
	\le &\frac{b_B(1+\|u_n\|_V+\|w_n\|_E)\|w_0\|_E+ c_H(1+\|\delta_2\|\|u_n\|_V+\|\gamma_2\|\|w_n\|_E)\|\gamma_2\|\|w_0\|_E}{\|w_n\|_E}\nonumber\\
	&+\frac{\theta(w_0)+\alpha_\theta\|w_n\|_E+\beta_\theta+\|l\|_{E^*}\left(\|w_0\|_E+\|w_n\|_E\right)}{\|w_n\|_E}\\
	\le &\, b_B(c_1+1)\|w_0\|_E+ c_H(\|\delta_2\|c_1+\|\gamma_2\|)\|\gamma_2\|\|w_0\|_E+ \theta(w_0)+\alpha_\theta  +\|l\|_{E^*}.
	\end{align*}
	This implies that the set ${\mathbb S}(h,l)$ is bounded.

	\noindent
	{\bf Step 2.} {\it For each $w\in E$ (resp. $u\in V$) fixed, the solution set of inequality problem $(\ref{eqn1})$ (resp. $(\ref{eqn2})$) is nonempty, bounded and closed.}
	
	Let $w\in E$ be arbitrary fixed. By the definition of generalized subgradient in the sense of Clarke, we have
	\begin{align*}
	&\langle A(w,u)+\gamma^*_1\xi,u\rangle_V=\langle A(w,u),u\rangle_V-\langle \xi,-\gamma_1 u\rangle_X\\
	\ge& \langle A(w,u),u\rangle_V-J^0(\delta_1 y,\gamma_1 u;-\gamma_1u)
	\end{align*}
	for all $\xi\in \partial J(\delta_1w,\gamma_1 u)$. Taking into account hypothesis $H(A)$(v) and the inequality above, it gives
	\begin{align*}
	&\frac{\inf_{\xi\in\partial J(\delta_1 w,\gamma_1u)}\langle A(w,u)+\gamma^*\xi,u\rangle_V }{\|u\|_V}\ge \frac{\langle A(w,u),u\rangle_V-J^0(\delta_1 y,\gamma_1 u;-\gamma_1u)}{\|u\|_V}\\
	\ge&\, r_A(\|u\|_V,\|w\|_E)
	\end{align*}
	for all $u\in V$.
	This means that multivalued operator
	$$
	V\ni u\mapsto A(w,u)+
	\gamma_1^*\partial J(\delta_1w, \gamma_1u)\subset V^*
	$$
	is coercive. In an analogous way, we can verify that for every $u\in V$ fixed,  multivalued operator
	$$
	E\ni w\mapsto B(u,w)+\gamma_2^*\partial H(\delta_2u, \gamma_2w)\subset E^*
	$$
	is coercive as well. Therefore, we can invoke the same arguments as in the proof
	of~\cite[Theorem~3]{Liu-Liu-Wen-Yao-Zeng-2020-AMO} to conclude that the solution set of inequality problem (\ref{eqn1}) (resp. (\ref{eqn2})) is nonempty, bounded and closed.
	
	Next, we introduce the multivalued map
	$\Gamma\colon C\times D\to 2^{C\times D}$
	defined by
	\begin{equation}\label{eqn8}
	\Gamma(u,w):=(\mathcal P(w),\mathcal Q(u))
	\ \mbox{ for all } \ (u,w)\in C\times D,
	\end{equation}
	where $\mathcal P\colon E\to 2^C$ and $\mathcal Q\colon V\to 2^D$ stand for the solution mappings  problems (\ref{eqn1}) and (\ref{eqn2}), respectively, namely, $\mathcal P(w)$ and $\mathcal Q(u)$ are the solution sets of problems (\ref{eqn1}) and (\ref{eqn2}) corresponding to $w\in E$ and $u\in V$, respectively.
	From the definition of $\Gamma$, it is not difficult to show that $(u,w)\in C\times D$ is a fixed point of $\Gamma$ if and only if it is a solution of Problem~\ref{problems1}. Based on this property, we are going to verify that $\Gamma$ has at least one fixed point in $C\times D$.

	\noindent
	{\bf Step 3.} {\it There exists a bounded, closed and convex subset $\mathcal X$ of $C\times D$ such that $\Gamma$ maps $\mathcal X$ into itself.}
	
	Indeed, it is sufficient to show that there exists a constant $m_0>0$ such that
	\begin{equation}\label{eqn9}
	\Gamma(\mathcal O(m_0))\subset \mathcal O(m_0),
	\end{equation}
	where $\mathcal O(m_0)$ is defined by
	\begin{equation*}
	\mathcal O(m_0):=\{(u,w)\in C\times D\,\mid\,\|u\|_V\le m_0\mbox{ and }\|w\|_E\le m_0\}.
	\end{equation*}
	Arguing by contradiction, there is no $m_0>0$ such that (\ref{eqn9}) holds. So, for each $n\in \mathbb N$, there exist sequences $(u_n,w_n)$, $(v_n,z_n)\in C\times D$ satisfying
	\begin{equation*}
	(u_n,w_n)\in \mathcal O(n),\mbox{  $(v_n,z_n)\in \Gamma(u_n,w_n)$ and $\|v_n\|_V>n$ or $\|z_n\|_E>n$}.
	\end{equation*}
	Now, we suppose that $\|v_n\|_V>n$.
	It follows from (\ref{eqn3}) that
	\begin{align*}
	&r_A(\|v_n\|_V,\|w_n\|_E) \nonumber\\
	\le &\frac{b_A(1+\|v_n\|_V+\|w_n\|_E)\|u_0\|_V+ c_J(1+\|\gamma_1\|\|v_n\|_V+\|\delta_1\|\|w_n\|_E)\|\gamma_1\|\|u_0\|_V}{\|v_n\|_V}\nonumber\\
	&+\frac{\psi(u_0)+\alpha_\psi\|v_n\|_V+\beta_\psi+\|h\|_{V^*}\left(\|u_0\|_V+\|v_n\|_V\right)}{\|v_n\|_V}.
	\end{align*}
	Recalling that $\|w_n\|_E\le n<\|v_n\|_V$,
	we apply hypothesis $H(A)$(v) to find that
	\begin{align*}
	&+\infty=\lim_{n\to\infty}r_A(\|v_n\|_V,\|w_n\|_E) \nonumber\\
	\le &\lim_{n\to\infty}\bigg[\frac{b_A(1+\|v_n\|_V+\|w_n\|_E)\|u_0\|_V+ c_J(1+\|\gamma_1\|\|v_n\|_V+\|\delta_1\|\|w_n\|_E)\|\gamma_1\|\|u_0\|_V}{\|v_n\|_V}\nonumber\\
	&+\frac{\psi(u_0)+\alpha_\psi\|v_n\|_V+\beta_\psi+\|h\|_{V^*}\left(\|u_0\|_V+\|v_n\|_V\right)}{\|v_n\|_V}\bigg]\\
	\le &\, 2b_A\|u_0\|_V+c_J(\|\gamma_1\|+\|\delta_1\|)\|\gamma_1\|\|u_0\|_V+\alpha_\psi+\|h\|_{V^*}.
	\end{align*}
	Hence we get a contradiction.
	As we did before, it can also lead to a contraction when the case $\|z_n\|_E>n$ occurs. This means that there exists a constant $m_0>0$ such that (\ref{eqn9}) is valid. Therefore, we conclude that there exists a bounded, closed and convex subset $\mathcal X$ of $C\times D$ such that $\Gamma$ maps $\mathcal X$ into itself.

	\noindent
	{\bf Step 4.} {\it $\Gamma$ is weakly-weakly upper semicontinuous.}
	
	Let $\mathcal M\subset C\times D$ be an arbitrary weakly closed set such that
	$\Gamma^-(\mathcal M)\neq\emptyset$.
	From Proposition~\ref{uscproposition}, it is sufficient to verify that $\Gamma^-(\mathcal M)$ is weakly closed in $V\times E$. Let $\{(v_n,z_n)\}\subset \Gamma^-(\mathcal M)$ be such that
	\begin{equation}\label{eqn10}
	(u_n,w_n)\wto (u,w)\mbox{ in $V\times E$ as $n\to\infty$}
	\end{equation}
	for some $(u,w)\in V\times E$. Hence, for every $n\in\mathbb N$, we are able to find $(u_n,w_n)\in V\times E$ satisfying
	\begin{equation*}
	(v_n,z_n)\in \Gamma(u_n,w_n)\cap\mathcal M.
	\end{equation*}
	From (\ref{eqn3}) and (\ref{eqn4}), we have
	\begin{align*}
	&r_A(\|v_n\|_V,\|w_n\|_E) \nonumber\\
	\le &\frac{b_A(1+\|v_n\|_V+\|w_n\|_E)\|u_0\|_V+ c_J(1+\|\gamma_1\|\|v_n\|_V+\|\delta_1\|\|w_n\|_E)\|\gamma_1\|\|u_0\|_V}{\|v_n\|_V}\nonumber\\
	&+\frac{\psi(u_0)+\alpha_\psi\|v_n\|_V+\beta_\psi+\|h\|_{V^*}\left(\|u_0\|_V+\|v_n\|_V\right)}{\|v_n\|_V},
	\end{align*}
	and
	\begin{align*}
	&r_B(\|z_n\|_E,\|u_n\|_V)\nonumber\\
	\le &\frac{b_B(1+\|u_n\|_V+\|z_n\|_E)\|w_0\|_E+ c_H(1+\|\delta_2\|\|u_n\|_V+\|\gamma_2\|\|z_n\|_E)\|\gamma_2\|\|w_0\|_E}{\|z_n\|_E}\nonumber\\
	&+\frac{\theta(w_0)+\alpha_\theta\|z_n\|_E+\beta_\theta+\|l\|_{E^*}\left(\|w_0\|_E+\|z_n\|_E\right)}{\|z_n\|_E}.
	\end{align*}
	Combining the latter with $H(A)$(v), $H(B)$(v), we infer that sequence $\{(v_n,$ $z_n)\}$ is bounded in $V\times E$. Passing to a relabeled subsequence if necessary, we may suppose that
	\begin{equation}\label{eqn11}
	(v_n,z_n)\wto (v,z)\mbox{ in $V\times E$ as $n\to\infty$}
	\end{equation}
	for some $(v,z)\in V\times E$.
	
	Next, for each $n\in\mathbb N$, let $\xi_n\in X^*$ and $\eta_n\in Y^*$ be such that
	\begin{align*}
	&\langle\xi_n,\gamma_1 (x-v_n)\rangle_X=J^0(\delta_1 w_n,\gamma_1 v_n;\gamma_1(x-v_n)),\\
	&\langle\eta_n,\gamma_2 (y-z_n)\rangle_Y=H^0(\delta_2 u_n,\gamma_2 z_n;\gamma_2(y-z_n)).
	\end{align*}
	Hence, we have
	\begin{equation*}
	\langle A(w_n,v_n)+\gamma _1^*\xi_n,x-v_n\rangle_V+\psi(x)-\psi(v_n)\ge \langle h,x-v_n\rangle_V
	\end{equation*}
	for all $x\in C$, and
	\begin{equation*}
	\langle B(u_n,z_n)+\gamma_2^*\eta_n,y-z_n\rangle_E+\theta(y)-\theta(z_n)\ge \langle l,y-z_n\rangle_E
	\end{equation*}
	for all $y\in D$.
	Then, we use the hypotheses $H(A)$(ii) and $H(B)$(ii) to find that
	\begin{align*}
	&\langle h,x-v_n\rangle_V\\
	\le &
	\langle A(w_n,x)+\gamma _1^*\alpha_n,x-v_n\rangle_V+\psi(x)-\psi(v_n)\\
	\le& \langle A(w_n,x),x-v_n\rangle_V+J^0(\delta_1 w_n,\gamma_1x;\gamma_1(x-v_n))+\psi(x)-\psi(v_n)
	\end{align*}
	for all $\alpha_n\in\partial J(\delta_1 w_n,\gamma_1 x)$ and $x\in C$, and
	\begin{align*}
	& \langle l,y-z\rangle_E\\
	\le &
	\langle B(u_n,y)+\gamma_2^*\beta_n,y-z_n\rangle_E+\theta(y)-\theta(z_n)\\
	\le &\langle B(u_n,y),y-z_n\rangle_E+H^0(\delta_2 u_n,\gamma_2 y;\gamma_2(y-z_n))+\theta(y)-\theta(z_n)
	\end{align*}
	for all $\beta_n\in\partial H(\delta_2u_n,\gamma_2 y)$ and $y\in D$. Passing to the upper limit as $n\to\infty$ in the inequalities above and using hypotheses $H(J)$(iii), $H(H)$(iii), $H(A)$(iii) and $H(B)$(iii), we obtain
	\begin{align*}
	&\langle h,x-v\rangle_V\\
	=&\limsup_{n\to\infty}\langle h,x-v_n\rangle_V\\
	\le& \limsup_{n\to\infty}\left[\langle A(w_n,x),x-v_n\rangle_V+J^0(\delta_1 w_n,\gamma_1x;\gamma_1(x-v_n))+\psi(x)-\psi(v_n)\right]\\
	\le& \limsup_{n\to\infty}\langle A(w_n,x),x-v_n\rangle_V+ \limsup_{n\to\infty}J^0(\delta_1 w_n,\gamma_1x;\gamma_1(x-v_n))\\
	&+\psi(x)- \liminf_{n\to\infty}\psi(v_n) \\
	\le &\langle A(w,x),x-v\rangle_V+J^0(\delta_1 w,\gamma_1x;\gamma_1(x-v))+\psi(x)-\psi(v),
	\end{align*}
	and
	\begin{align*}
	&\langle l,y-z_n\rangle_E\\
	=&\limsup_{n\to\infty} \langle l,y-z_n\rangle_E\\
	\le &\limsup_{n\to\infty}\left[\langle B(u_n,y),y-z_n\rangle_E+H^0(\delta_2 u_n,\gamma_2 y;\gamma_2(y-z_n))+\theta(y)-\theta(z_n)\right]\\
	\le& \limsup_{n\to\infty}\langle B(u_n,y),y-z_n\rangle_E+\limsup_{n\to\infty} H^0(\delta_2 u_n,\gamma_2 y;\gamma_2(y-z_n))\\
	&+\theta(y)-\liminf_{n\to\infty}\theta(z_n)\\
	\le &\langle B(u,y),y-z\rangle_E+H^0(\delta_2 u,\gamma_2 y;\gamma_2(y-z))+\theta(y)-\theta(z).
	\end{align*}
	Hence
	\begin{equation}\label{eqn001}
	\langle A(w,x),x-v\rangle_V+J^0(\delta_1 w,\gamma_1x;\gamma_1(x-v))+\psi(x)-\psi(v)\ge \langle h,x-v\rangle_V
	\end{equation}
	for all $x\in C$,
	and
	\begin{equation}\label{eqn002}
	\langle B(u,y),y-z\rangle_E+H^0(\delta_2 u,\gamma_2 y;\gamma_2(y-z))+\theta(y)-\theta(z)\ge \langle l,y-z_n\rangle_E
	\end{equation}
	for all $y\in D$.
	Let $t\in(0,1)$ and $r\in C$ be arbitrary. We insert $x=x_t:=tr+(1-t)v$ into (\ref{eqn001}) and apply the positive homogeneity of $v\mapsto J^0(w,u;v)$ and convexity of $\psi$ to get
	\begin{align*}
	&t\left[\langle A(w,x_t),r-v\rangle_V+J^0(\delta_1 w,\gamma_1x_t;\gamma_1(r-v))+\psi(r)-\psi(v)\right]\\
	\ge &t\langle A(w,x_t),r-v\rangle_V+J^0(\delta_1 w,\gamma_1x_t;t\gamma_1(r-v))+\psi(x_t)-\psi(v)\\
	\ge& t\langle h,r-v\rangle_V.
	\end{align*}
	So, we obtain
	\begin{equation*}
	\langle A(w,x_t),r-v\rangle_V+J^0(\delta_1 w,\gamma_1x_t;\gamma_1(r-v))+\psi(r)-\psi(v)\ge\langle h,r-v\rangle_V.
	\end{equation*}
	Passing to the upper limit as $t\downarrow0$ in the inequality above and using hypothesis $H(A)$(i) and upper semicontinuity of $(u,v)\mapsto J^0(w,u;v)$, we have
	\begin{align*}
	&\langle A(w,v),r-v\rangle_V+J^0(\delta_1 w,\gamma_1 v;\gamma_1(r-v))+\psi(r)-\psi(v)\\
	\ge &
	\limsup_{t\downarrow0}\left[ \langle A(w,x_t),r-v\rangle_V+J^0(\delta_1 w,\gamma_1x_t;\gamma_1(r-v))+\psi(r)-\psi(v)\right]\\
	\ge&\langle h,r-v\rangle_V.
	\end{align*}
	Since $r\in C$ is arbitrary, so, we have that $v\in C$ is a solution of problem (\ref{eqn1}) corresponding to $w\in D$. Similarly, we also can obtain that $z\in D$ is a solution of problem (\ref{eqn2}) corresponding to $u\in C$.  This implies that $(v,z)\in \Gamma(u,w)$. Due to the weak closedness of $\mathcal M$, we obtain that $(v,z)\in \mathcal M$, that is, $(u,w)\in \Gamma^-(\mathcal M)$. Therefore, we use  Proposition~\ref{uscproposition} to conclude that  $\Gamma$ is weakly-weakly u.s.c..
	
	We conclude that all conditions of the Tychonoff theorem, Theorem~\ref{TychonoffFPT}, have been verified. Using this theorem, we deduce that
	$\Gamma$ has at least one fixed point
	$(u^*,w^*)\in C\times D$ in $\mathcal X$.
	This implies that $(u^*,w^*)\in C\times D$ is also a solution of Problem~\ref{problems1}.

	\noindent
	{\bf Step 5.} {\it The set ${\mathbb S}(h,l)$ is weakly compact.}
	
	From Step 1, we can see that the set
	${\mathbb S}(h,l)$ is bounded. Because of the reflexivity of $V\times E$, it is sufficient to show that ${\mathbb S}(h,l)$  is weakly closed.
	Let $\{(u_n,w_n)\}\subset {\mathbb S}(h,l)$
	be a sequence such that
	\begin{equation}\label{eqn12}
	(u_n,w_n)\wto (u,w)\mbox{ in $V\times E$ as $n\to\infty$}
	\end{equation}
	for some $(u,w)\in V\times E$. In virtue of the definition of $\Gamma$, it yields $(u_n,w_n)\in \Gamma(u_n,w_n)$. Keeping in mind that $\Gamma$ is weakly-weakly u.s.c. and has nonempty, bounded, closed and convex values, it follows
	from~\cite[Theorem~1.1.4]{Kamemsloo-Obukhovskii-Zecca-2001} that $\Gamma$ is weakly-weakly closed. The latter together with the convergence (\ref{eqn12}) implies that $(u,v)\in \Gamma(u,v)$. From the definition of $\Gamma$, we infer that $(u,v)\in {\mathbb S}(h,l)$, i.e., ${\mathbb S}(h,l)$ is weakly closed. Consequently, we conclude that ${\mathbb S}(h,l)$ is weakly compact in $V\times E$.
	This completes the proof.
\end{proof}

We formulate several corollaries of Theorem~\ref{theorems1}.
To this end, we need the following hypotheses.

\medskip

\noindent ${\underline{H(J')}}$:
$J\colon X\to\real$ is locally Lipschitz continuous such that
there exists a constant $c_J\ge 0$ such that
\begin{equation*}
\|\xi\|_{X^*}\le c_J\left(1+\|u\|_X \right)
\end{equation*}
for all $\xi\in \partial J(u)$ and $u\in X$.

\medskip

\noindent ${\underline{H(H')}}$:
$H\colon Y\to\real$ is locally Lipschitz continuous such that
there exists a constant $c_H\ge 0$ such that
\begin{equation*}
\|\eta\|_{Y^*}\le c_H\left(1 +\|w\|_Y\right)
\end{equation*}
for all $\eta\in \partial H(w)$ and $w\in Y$.

\medskip

\noindent  ${\underline{H(2')}}$: $\gamma_1\colon V\to X$ and $\gamma_2\colon E\to Y$ are bounded, linear and compact.

\medskip

\noindent  ${\underline{H(A)}}$(ii)':  for each $w\in E$ the multivalued mapping $V\ni u\mapsto A(w,u)+\gamma_1^*\partial J(\gamma_1u)\subset V^*$ is stable $\psi$-pseudomonotone with respect to $\{h\}$.

\medskip

\noindent  ${\underline{H(A)}}$(v)': there exists a function $r_A\colon \real_+\times \real_+\to \real$ such that
\begin{equation*}
\langle A(w,u),u\rangle_V-J^0(\gamma_1u;-\gamma_1u)\ge r_A(\|u\|_V,\|w\|_E)\|u\|_V\mbox{ for all $u\in V$ and $w\in E$},
\end{equation*}
and
\begin{itemize}
	\item[$\bullet$] for every nonempty and bounded set $O\subset \real_+$, we have $r_A(t,s)\to +\infty$ as $t\to +\infty$ for all $s\in O$,
	\item[$\bullet$] for any constants $c_1,c_2\ge 0$, it holds $r_A(t,c_1t+c_2)\to +\infty$ as $t\to +\infty$,
	\item[$\bullet$] for sequences $\{s_n\}\subset \real_+$ and $\{t_n\}\subset$  such that
	\begin{equation*}
	s_n\to +\infty,\mbox{ $t_n\to+\infty$ and $\frac{t_n}{s_n}\to 0$ as $n\to\infty$},
	\end{equation*}
	we have
	\begin{equation*}
	r_A(s_n,t_n)\to +\infty\mbox{ as $n\to\infty$}.
	\end{equation*}
\end{itemize}

\medskip

\noindent  ${\underline{H(B)}}$(ii)': for each $u\in V$ the multivalued mapping $E\ni w\mapsto B(u,w)+\gamma_2^*\partial H(\gamma_2w)\subset E^*$ is stable $\theta$-pseudomonotone with respect to $\{l\}$.

\medskip

\noindent  ${\underline{H(B)}}$(v)': there exists a function $r_B\colon \real_+\times \real_+\to \real$ such that
\begin{equation*}
\langle B(u,w),w\rangle_E-H^0(\gamma_2w;-\gamma_2w)\ge r_B(\|w\|_E,\|u\|_V)\|w\|_E\mbox{ for all $u\in V$ and $w\in E$},
\end{equation*}
and
\begin{itemize}
	\item[$\bullet$] for every nonempty and bounded set $O\subset \real_+$, we have $r_B(t,s)\to +\infty$ as $t\to +\infty$ for all $s\in O$,
	\item[$\bullet$] for any constants $c_1,c_2\ge 0$, it holds $r_B(t,c_1t+c_2)\to +\infty$ as $t\to +\infty$,
	\item[$\bullet$] for sequences $\{s_n\}\subset \real_+$ and $\{t_n\}\subset$  such that
	\begin{equation*}
	s_n\to +\infty,\mbox{ $t_n\to+\infty$ and $\frac{t_n}{s_n}\to 0$ as $n\to\infty$},
	\end{equation*}
	we have
	\begin{equation*}
	r_B(s_n,t_n)\to +\infty\mbox{ as $n\to\infty$}.
	\end{equation*}
\end{itemize}

\begin{corollary}\label{corollarys1}
	Suppose that $H(A)${\rm(i), (ii)', (iii), (iv)'}, $H(B)${\rm(i), (ii)', (iii), (iv)'}, $H(0)$, $H(1)$,  $H(2')$, $H(J')$, $H(H')$, $H(\psi)$ and $H(\theta)$ hold. Then, the set of solutions to problem $(\ref{prob1a})$--$(\ref{prob1b})$
	is nonempty and weakly compact in $V\times E$.
\end{corollary}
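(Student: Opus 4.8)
The plan is to observe that the coupled system (\ref{prob1a})--(\ref{prob1b}) is exactly the instance of Problem~\ref{problems1} in which the locally Lipschitz superpotentials are independent of their first arguments, and then to derive the conclusion directly from Theorem~\ref{theorems1}. To set this up, I would fix arbitrary auxiliary spaces, for definiteness $Z_1=Z_2=\real$, and choose $\delta_1\colon E\to Z_1$ and $\delta_2\colon V\to Z_2$ to be the zero operators. These are trivially bounded, linear and compact (their ranges are single points), so combined with $H(2')$ they furnish hypothesis $H(2)$. I would then define $\widetilde J\colon Z_1\times X\to\real$ and $\widetilde H\colon Z_2\times Y\to\real$ by $\widetilde J(s,u):=J(u)$ and $\widetilde H(t,w):=H(w)$; with these data Problem~\ref{problems1} is literally the system (\ref{prob1a})--(\ref{prob1b}), because $\delta_1 w\equiv 0$ forces $\widetilde J^0(\delta_1 w,\gamma_1 u;\cdot)=J^0(\gamma_1 u;\cdot)$ and similarly for $\widetilde H$.

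The core of the proof is to check that the primed hypotheses of the corollary reduce, for the data just constructed, to the corresponding unprimed hypotheses of Theorem~\ref{theorems1}. Since $\widetilde J$ does not depend on its first variable, its Clarke subgradient and generalized directional derivative taken in the second variable satisfy $\partial\widetilde J(s,u)=\partial J(u)$ and $\widetilde J^0(s,u;v)=J^0(u;v)$ for all $s$. Consequently, $H(J')$ yields $H(J)(\mathrm{i})$ at once; the growth estimate of $H(J')$ yields $H(J)(\mathrm{ii})$ because appending the nonnegative term $\|s\|_{Z_1}$ to the right-hand side only weakens the bound; and $H(J)(\mathrm{iii})$ becomes the trivial identity $J^0(u;v)=J^0(u;v)$. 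The three identical observations applied to $\widetilde H$ deliver $H(H)$ from $H(H')$. In the same way, $\delta_1 w\equiv 0$ identifies the operator $u\mapsto A(w,u)+\gamma_1^*\partial\widetilde J(\delta_1 w,\gamma_1 u)$ with $u\mapsto A(w,u)+\gamma_1^*\partial J(\gamma_1 u)$, so that $H(A)(\mathrm{ii})'$ is exactly $H(A)(\mathrm{ii})$ and $H(A)(\mathrm{v})'$ is exactly $H(A)(\mathrm{v})$ for the reduced problem, while $H(A)(\mathrm{i})$, $(\mathrm{iii})$, $(\mathrm{iv})$ carry over verbatim as they involve neither $J$ nor $\delta_1$. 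The symmetric identifications transfer $H(B)(\mathrm{ii})'$, $H(B)(\mathrm{v})'$ and the remaining conditions on $B$.

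Having assembled $H(A)$, $H(B)$, $H(0)$, $H(1)$, $H(2)$, $H(\widetilde J)$, $H(\widetilde H)$, $H(\psi)$ and $H(\theta)$ for the reduced data, I would apply Theorem~\ref{theorems1} to conclude that the solution set of (\ref{prob1a})--(\ref{prob1b}) is nonempty and weakly compact in $V\times E$. I do not anticipate a substantive obstacle: the whole content of the corollary is that dropping the first variable of $J$ and $H$ collapses the stability-and-continuity conditions $H(J)(\mathrm{iii})$, $H(H)(\mathrm{iii})$ to trivialities and renders the compactness requirement on $\delta_1,\delta_2$ vacuous. The only point that genuinely needs a moment's verification is the direction of the growth inequality in $H(J)(\mathrm{ii})$ relative to $H(J')$, and this holds precisely because $\|s\|_{Z_1}\ge 0$.
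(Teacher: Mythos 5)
Your proposal is correct and matches the paper's (implicit) argument: the paper offers no separate proof of Corollary~\ref{corollarys1} precisely because it is the specialization of Theorem~\ref{theorems1} to superpotentials independent of their first variables, which is exactly the reduction you carry out via the trivial data $Z_1=Z_2=\real$, $\delta_1=\delta_2=0$, $\widetilde J(s,u)=J(u)$, $\widetilde H(t,w)=H(w)$. Your verification that the primed hypotheses then coincide with $H(J)$, $H(H)$, $H(A)$(ii),(v) and $H(B)$(ii),(v) for the reduced data (correctly reading the paper's mislabeled ``(iv)$'$'' as the primed coercivity condition $H(A)$(v)$'$, $H(B)$(v)$'$) is sound and complete.
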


\begin{corollary}\label{corollarys2}
	Suppose that $H(A)$, $H(B)$, $H(0)$, $H(1)$,  $H(2)$, $H(J)$ and $H(H)$ are fulfilled. Then, the set of solutions to problem $(\ref{prob2a})$--$(\ref{prob2b})$
	is nonempty and weakly compact in $V\times E$.
\end{corollary}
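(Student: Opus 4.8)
The plan is to obtain Corollary~\ref{corollarys2} as an immediate specialization of Theorem~\ref{theorems1}, taking the convex potentials to be identically zero. Indeed, problem~(\ref{prob2a})--(\ref{prob2b}) is nothing but Problem~\ref{problems1} in the case $\psi\equiv0$ and $\theta\equiv0$: with these choices the terms $\psi(v)-\psi(u)$ and $\theta(z)-\theta(w)$ in~(\ref{eqn1})--(\ref{eqn2}) vanish, and the two inequalities collapse exactly onto~(\ref{prob2a}) and~(\ref{prob2b}). Thus the whole task is to check that the hypotheses $H(A)$, $H(B)$, $H(0)$, $H(1)$, $H(2)$, $H(J)$, $H(H)$ assumed in the corollary, supplemented by the choice $\psi=\theta\equiv0$, entail the complete list of hypotheses demanded by Theorem~\ref{theorems1}.

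First I would verify $H(\psi)$ and $H(\theta)$ for the zero functions. The constant function $\psi\equiv0$ is convex and continuous, hence lower semicontinuous, and it is finite everywhere, so that $\{u\in V\mid\psi(u)<+\infty\}\cap C=C\neq\emptyset$ by $H(0)$; therefore $H(\psi)$ holds, and the identical argument gives $H(\theta)$. All the remaining hypotheses are assumed verbatim in the corollary, so nothing further is required. It is worth recording the two consistency points: the stable $\psi$-pseudomonotonicity in $H(A)$(ii) reduces, for $\psi\equiv0$, to the ordinary stable pseudomonotonicity of $u\mapsto A(w,u)+\gamma_1^*\partial J(\delta_1 w,\gamma_1 u)$ with respect to $\{h\}$, since the $\psi(v)-\psi(u)$ contributions drop out of both defining inequalities, and likewise for $H(B)$(ii); moreover the coercivity conditions $H(A)$(v) and $H(B)$(v) involve only $J^0$ and $H^0$ and are therefore untouched by the choice of $\psi$ and $\theta$.

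Having matched every hypothesis, I would invoke Theorem~\ref{theorems1} with $\psi=\theta\equiv0$ to conclude that the solution set of Problem~\ref{problems1} in this case --- which coincides with the solution set of~(\ref{prob2a})--(\ref{prob2b}) --- is nonempty and weakly compact in $V\times E$. There is essentially no analytical obstacle in this argument: the entire content of the corollary is the observation that the zero potential is an admissible choice in $H(\psi)$ and $H(\theta)$. The only point deserving a moment's attention is the consistency check just mentioned, namely that the generalized monotonicity and coercivity conditions retain both their meaning and their validity after setting $\psi=\theta\equiv0$; this is immediate, so the corollary follows directly from Theorem~\ref{theorems1} without reworking any step of its proof.
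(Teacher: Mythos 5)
Your proposal is correct and coincides with the paper's (implicit) argument: the paper offers no separate proof for this corollary precisely because it follows from Theorem~\ref{theorems1} by taking $\psi\equiv 0$ and $\theta\equiv 0$, checking that $H(\psi)$ and $H(\theta)$ then hold trivially (with $\mathrm{dom}(\psi)\cap C=C\neq\emptyset$ and $\mathrm{dom}(\theta)\cap D=D\neq\emptyset$ by $H(0)$), exactly as you do. Your additional consistency remarks about the $\psi$-pseudomonotonicity and coercivity hypotheses reducing correctly under the zero potentials are accurate and complete the verification.
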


\medskip

\noindent  ${\underline{H(A)}}$(ii)'':  for each $w\in E$,  $V\ni u\mapsto A(w,u)\in V^*$ is stable $\psi$-pseudomonotone with respect to $\{h\}$.

\medskip

\noindent  ${\underline{H(A)}}$(v)'': there exists a function $r_A\colon \real_+\times \real_+\to \real$ such that
\begin{equation*}
\langle A(w,u),u\rangle_V \ge r_A(\|u\|_V,\|w\|_E)\|u\|_V\mbox{ for all $u\in V$ and $w\in E$},
\end{equation*}
and
\begin{itemize}
	\item[$\bullet$] for every nonempty and bounded set $O\subset \real_+$, we have $r_A(t,s)\to +\infty$ as $t\to +\infty$ for all $s\in O$,
	\item[$\bullet$] for any constants $c_1,c_2\ge 0$, it holds $r_A(t,c_1t+c_2)\to +\infty$ as $t\to +\infty$,
	\item[$\bullet$] for sequences $\{s_n\}\subset \real_+$ and $\{t_n\}\subset$  such that
	\begin{equation*}
	s_n\to +\infty,\mbox{ $t_n\to+\infty$ and $\frac{t_n}{s_n}\to 0$ as $n\to\infty$},
	\end{equation*}
	we have
	\begin{equation*}
	r_A(s_n,t_n)\to +\infty\mbox{ as $n\to\infty$}.
	\end{equation*}
\end{itemize}

\medskip

\noindent  ${\underline{H(B)}}$(ii)'': for each $u\in V$,  $E\ni w\mapsto B(u,w)\in E^*$ is stable $\theta$-pseudomonotone with respect to $\{l\}$.

\medskip

\noindent  ${\underline{H(B)}}$(v)'': there exists a function $r_B\colon \real_+\times \real_+\to \real$ such that
\begin{equation*}
\langle B(u,w),w\rangle_E \ge r_B(\|w\|_E,\|u\|_V)\|w\|_E\mbox{ for all $u\in V$ and $w\in E$},
\end{equation*}
and
\begin{itemize}
	\item[$\bullet$] for every nonempty and bounded set $O\subset \real_+$, we have $r_B(t,s)\to +\infty$ as $t\to +\infty$ for all $s\in O$,
	\item[$\bullet$] for any constants $c_1,c_2\ge 0$, it holds $r_B(t,c_1t+c_2)\to +\infty$ as $t\to +\infty$,
	\item[$\bullet$] for sequences $\{s_n\}\subset \real_+$ and $\{t_n\}\subset$  such that
	\begin{equation*}
	s_n\to +\infty,\mbox{ $t_n\to+\infty$ and $\frac{t_n}{s_n}\to 0$ as $n\to\infty$},
	\end{equation*}
	we have
	\begin{equation*}
	r_B(s_n,t_n)\to +\infty\mbox{ as $n\to\infty$}.
	\end{equation*}
\end{itemize}

\begin{corollary}\label{corollarys3}
	Suppose that $H(A)${\rm(i), (ii)'', (iii), (iv)''}, $H(B)${\rm(i), (ii)'', (iii), (iv)''}, $H(0)$, $H(1)$, $H(\psi)$ and $H(\theta)$ are satisfied. Then, the set of solutions to problem $(\ref{eqn003})$--$(\ref{eqn004})$
	is nonempty and weakly compact in $V\times E$.
\end{corollary}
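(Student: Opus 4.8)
The plan is to deduce Corollary~\ref{corollarys3} directly from Theorem~\ref{theorems1} by specializing to the case $J\equiv0$ and $H\equiv0$. As recorded in item (iii) of the introduction, the coupled variational inequality system $(\ref{eqn003})$--$(\ref{eqn004})$ is precisely Problem~\ref{problems1} when both locally Lipschitz functions vanish identically. The decisive observation is that under this specialization one has $J^0\equiv0$ and $\partial J=\{0\}$ (and likewise $H^0\equiv0$, $\partial H=\{0\}$), so each unprimed hypothesis of Theorem~\ref{theorems1} collapses onto exactly the double-primed hypothesis assumed here. Thus the whole task reduces to checking that the data of Corollary~\ref{corollarys3} is the restriction of the general data of Theorem~\ref{theorems1}.

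First I would verify that hypotheses $H(J)$ and $H(H)$ hold trivially for $J\equiv0$ and $H\equiv0$. The zero function is locally Lipschitz, so $H(J)$(i) is satisfied; its Clarke subgradient is $\{0\}$, whence the growth bound $H(J)$(ii) holds with $c_J=0$; and the semicontinuity requirement $H(J)$(iii) is automatic since every term appearing there is identically zero. The same reasoning applies verbatim to $H(H)$. Concerning $H(2)$, I note that the compact linear maps $\gamma_1,\gamma_2,\delta_1,\delta_2$ enter the formulation of Problem~\ref{problems1} only through $J^0$, $H^0$ and the subgradients $\partial J$, $\partial H$; since all of these vanish when $J\equiv H\equiv0$, these operators play no role whatsoever, and one may simply take them to be the zero operators, which are bounded, linear and compact, so that $H(2)$ is formally met. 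This is why $H(2)$ is absent from the hypotheses of the corollary.

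Next I would match the remaining $A$- and $B$-conditions. Hypotheses $H(A)$(i), (iii), (iv) and $H(B)$(i), (iii), (iv) are assumed directly, as they do not involve $J$ or $H$. For the monotonicity hypothesis, $H(A)$(ii) demands that $V\ni u\mapsto A(w,u)+\gamma_1^*\partial J(\delta_1 w,\gamma_1 u)\subset V^*$ be stable $\psi$-pseudomonotone; with $\partial J=\{0\}$ the subgradient term drops out and this becomes stable $\psi$-pseudomonotonicity of $V\ni u\mapsto A(w,u)$, which is precisely $H(A)$(ii)''. For the coercivity hypothesis, $H(A)$(v) reads $\langle A(w,u),u\rangle_V-J^0(\delta_1 w,\gamma_1 u;-\gamma_1 u)\ge r_A(\|u\|_V,\|w\|_E)\|u\|_V$, and with $J^0\equiv0$ this is exactly the inequality in $H(A)$(v)'' (the three limiting conditions on $r_A$ being identical in both versions). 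The analogous reductions transform $H(B)$(ii), (v) into $H(B)$(ii)'', (v)''.

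Having thereby confirmed that all hypotheses of Theorem~\ref{theorems1} are in force, I would conclude by invoking that theorem. Its conclusion states that the solution set to Problem~\ref{problems1}, which here is the solution set ${\mathbb S}(h,l)$ to $(\ref{eqn003})$--$(\ref{eqn004})$, is nonempty and weakly compact in $V\times E$, and this is exactly the assertion of the corollary. I expect no genuine obstacle in this argument, since its entire content is the bookkeeping identification of the double-primed data with the restriction of the general data; the only point meriting a word of care is the status of $H(2)$, which becomes vacuous once $J$ and $H$ are trivial, as explained above.
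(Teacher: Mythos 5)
Your proposal is correct and takes essentially the same route the paper intends: the paper gives no separate argument for Corollary~\ref{corollarys3}, which is exactly the specialization $J\equiv0$, $H\equiv0$ of Theorem~\ref{theorems1}, under which $J^0\equiv0$ and $\partial J=\{0\}$ (likewise for $H$) render $H(J)$, $H(H)$ and $H(2)$ trivially satisfied and collapse $H(A)$(ii), (v) and $H(B)$(ii), (v) to the double-primed conditions, just as you verify. Your implicit reading of the corollary's label ``(iv)''{}'' as the coercivity hypothesis $H(A)$(v)'' while retaining the growth condition $H(A)$(iv) (which Theorem~\ref{theorems1} genuinely uses in Steps 1, 3 and 4) is the correct repair of what is evidently a misprint in the statement.
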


\begin{remark}
	Corollary~\ref{corollarys3} coincides with
	a result of Liu-Yang-Zeng-Zhao~\cite[Theorem~7]{Liu-Yang-Zeng-Zhao-2021-JOTA}.
	In comparision with that result,
	in the present paper, we give a new proof which is based on a multivalued version of the Tychonoff fixed point principle in a Banach space along with the theory of nonsmoth analysis, the generalized monotonicity arguments and the Minty approach.
	We conclude that our results are much more general,
	improve the former one in several directions, and
	are proved by using a new approach.
\end{remark}


\section{Conclusions}
In the present paper,
a coupled system which consists of two nonlinear variational-hemi\-va\-ria\-tio\-nal inequalities with constraints in Banach spaces has been investigated.
A general
existence result to the system
was established by using a multivalued version of the Tychonoff fixed point principle in a Banach space together with the theory of nonsmoth analysis, generalized monotonicity arguments and the Minty approach. Our result extends the recent ones
obtained in~\cite[Theorem~7]{Liu-Yang-Zeng-Zhao-2021-JOTA}.

There are plenty of problems arising in engineering applications which can be formulated as a system of coupled variational-hemivariational inequalities.
With this motivation, in the future,
we plan to utilize the theoretical results established in this paper to study various real engineering problems. Also, we will further develop the mathematical theory for systems of the variational-hemivariational inequalities, to cover, for instance, stability analysis, optimal control, sensitivity, and homogenization.

\vskip 6mm
\noindent{\bf Acknowledgments}

\noindent
This project has received funding from the NNSF of China Grant Nos. 12001478 and 12101143, the European Union's Horizon 2020 Research and Innovation Programme under the Marie Sklodowska-Curie grant agreement No. 823731 CONMECH,
and the Startup Project of Doctor Scientific Research of Yulin Normal University No. G2020ZK07.
It is also supported by Natural Science Foundation of Guangxi Grants Nos.
2021GXNSFFA196004,
2020GXNSFBA\-297137,
GKAD21220144,
2018GXNSFAA281353,
the Ministry of Science and Higher Education of Republic of Poland under Grant No. 
440328/PnH2/2019, and the National Science Centre of Poland under Project No.
2021/41/B/ST1/01636.


\end{document}